% This is an AMSLatex
% File name: darff.tex
% Contains the latex source code for:
%%%
%  title: Division Algebras and Quadratic Reciprocity
% author: Timothy J. Ford
%         Department of Mathematics
%         Florida Atlantic University
%         Boca Raton, Florida 33431
%  email: Ford@fau.edu
%

%%%
\documentclass{amsart}
%%%\documentclass[letterpaper,oneside]{article}
%%%
%%% For making pdf file
         \pdfoutput=1
%         \usepackage{type1ec}
%         \usepackage[T1]{fontenc}
%         \usepackage{textcomp}
%         \usepackage[osf]{mathpazo}
%%%%
%\usepackage{amscd}
\usepackage{gensymb}%
\usepackage{amssymb}
\usepackage{amsfonts}%
\usepackage{amsmath}
\usepackage{url}%
\usepackage{hyperref}
\newtheorem{theorem}{Theorem}[section]
\newtheorem{corollary}[theorem]{Corollary}
\newtheorem{proposition}[theorem]{Proposition}
\newtheorem{lemma}[theorem]{Lemma}

\theoremstyle{definition}
\newtheorem{example}[theorem]{Example}

\theoremstyle{remark}

\newcommand{\coker}{\operatorname{coker}}
\newcommand{\im}{\operatorname{im}}
\newcommand{\Spec}{\operatorname{Spec}}
\newcommand{\Proj}{\operatorname{Proj}}
\newcommand{\Hom}{\operatorname{Hom}}
\newcommand{\Pic}{\operatorname{Pic}}

\newcommand{\B}{\operatorname{B}}
\newcommand{\HH}{\operatorname{H}}

\def \ScrO {\mathcal{O}} 
\title{Division Algebras and Quadratic Reciprocity}

\author{Timothy J. Ford}%
\address{Department of Mathematics,
  Florida Atlantic University,
  Boca Raton, Florida 33431}%
\email{\tt Ford@fau.edu}%
\date{June 8, 1995. Updated September 9, 1997. Last updated \today}%
\thanks{Preliminary Report}%
\begin{document}
\begin{abstract}
  The Grothendieck and Artin-Mumford exact sequences for the Brauer
  group of a function field in 1 or 2 variables are applied to derive
  reciprocity laws for $q$th power residues.
\end{abstract}

\subjclass[2010]{16K50; %
  Secondary 14F22, 12E15, 14F20, 11R52}%
\keywords{Brauer group, division algebra}%

\maketitle %

\section{Introduction}
For positive integers $n$ and $m$ define the Legendre symbol
\[
(n/m) = \left\{
\begin{array}{ll}
1 & \text{if } n \text{ is congruent to a square modulo } m \\
-1 & \text{otherwise}
\end{array}
\right. \text{ .}
\]
If $p$ and $q$ are distinct odd prime numbers, then the Quadratic
Reciprocity Formula (conjectured by Euler, proved by Gauss
\cite{MR0197380}) is 
\begin{equation}
\label{ab1}
(p/q)(q/p) = (-1)^{\frac{p-1}{2}\frac{q-1}{2}} \text{ .}
\end{equation}
This formula tells one how to determine the value of the symbol $(p/q)$, if
that of $(q/p)$ is known.

For a global field $K$ (that is, either an algebraic number field or an
algebraic function field in one variable over a finite constant field) the
structure of the Brauer group $B(K)$ was completely determined in the 1930s
by the work of Albert, Brauer, Hasse and E. Noether \cite[Chapter 7]{D:A}.
As a consequence of
their exact sequence describing $B(K)$ when $K$ is the field of rational
numbers $\mathbb Q$, it is possible to derive \eqref{ab1}.

In the 1960s and 1970s, turning the cohomological crank on the engine of
Algebraic Geometry, Grothendieck, M. Artin and Mumford derived exact
sequences for 
the Brauer group of function fields for varieties of dimension 1 and 2.
These Brauer group theorems can be viewed as generalizations of the results
of class field theory and furthermore can  be thought of as providing laws
of $q$th degree reciprocity.

Let us say what we mean by a $q$th degree reciprocity formula.
Suppose ${R}$ is a noetherian integral domain.
Let $f$ and $g$ be nonzero elements of $R$. Define a Legendre symbol
\begin{equation}
\label{eq4}
(f/g)_q = \left\{
\begin{array}{ll}
1 & \text{if } f \text{ is congruent to a $q$th power modulo } g \\
-1 & \text{otherwise}
\end{array}
\right. \text{ .}
\end{equation}
A $q$th degree reciprocity formula should be a formula allowing one to
compute $(f/g)_q$ in terms of $(g/f)_q$.

We  show that the Grothendieck and Artin-Mumford sequences
can sometimes be employed to achieve $q$th degree reciprocity for (a)
polynomials  in one variable over a field,
(b)  power series in two variables over an
algebraically closed field  and (c) 
power series in the variable $y$ with coefficients that
are polynomials in $x$ over an algebraically closed field.

Let $X$ be a regular, integral, locally noetherian, quasi-compact scheme
with generic stalk $K$.
Let $X_1$ denote the set of points of $X$ of codimension 1. Usually we
assume $X$ to be $\Spec{R}$ for a noetherian regular integral domain $R$
with quotient field $K$. In this case, $X_1$ consists of those prime ideals
in $R$ of height 1.

Throughout cohomology groups and sheafs will be for the \'etale topology.
The sheaf of units on $X$ is denoted $\mathbb G_m$. The sheaf $\mu_n$  of
$n$th roots of unity is the kernel of the
$n$th power map
\[
1 \to \mu_n \to \mathbb G_m \xrightarrow{n} \mathbb G_m \text{ .}
\]
Let $\mu = \bigcup_n \mu_n$ and $\mu(-1) = \bigcup_n \Hom(\mu_n,\mathbb Q/\mathbb
Z)$. 
If $X$ is a scheme over $\mathbb Z[1/n][\zeta]$ for a primitive $n$th root of
unity $\zeta$, then $\mu_n$ is isomorphic to the constant sheaf $\mathbb Z/n$
(noncanonically). The group $\HH^1(X,\mathbb Z/n)$ parametrizes the cyclic
Galois extensions of $X$ with group $\mathbb Z/n$.
The cohomology groups for $\mathbb G_m$ in the lowest degrees have the
following descriptions. The global sections of $\mathbb G_m$ make up the group
$\HH^0(X,\mathbb G_m)$.
The global units are those units in $K$ that are defined at each point of $X$.
The group $\HH^1(X,\mathbb G_m) = \Pic{X}$ is the Picard group of invertible
$\ScrO_X$-modules. The group  $\HH^2(X,\mathbb G_m)$ is the cohomological Brauer
group. If $X$ is an affine scheme (for example) it is known by the
Gabber-Hoobler Theorem \cite{MR611868} that the Brauer group $B(X)$ of classes
of Azumaya $\ScrO_X$-algebras is isomorphic under a canonical embedding to
the torsion subgroup of $\HH^2(X,\mathbb G_m)$.

Given $\alpha$ and $\beta$ in $K^\ast$ let $n$ be a positive integer that is
invertible in $K$ and let $\zeta$ be a primitive $n$th root of unity in
$K$. The symbol algebra $(\alpha,\beta)_n$
is the associative $K$-algebra
generated by elements $u$, $v$ subject to the relations $u^n= \alpha$, $v^n
= \beta$ and $uv = \zeta vu$. The symbol algebra $(\alpha,\beta)_n$  is
central simple over $K$ and represents a class in ${_n\B(K)}$.
This agrees with the cyclic crossed product algebra
$(K(\alpha^{1/n})/K,\sigma,\beta)$ for the cyclic Galois extension of
degree $n$
$K[u]/(u^n-\alpha)$ whose group is generated by $\sigma$ and with factor
set $\beta$ \cite[Section~30]{R:MO}.

The following theorem gives
the fundamental connection between $q$th power residues and division
algebras. 
\begin{theorem}\label{th3}
Let $\alpha$ and $\beta$ be elements of $R$ where $R$ is a noetherian,
regular, integral domain. Let $X=\Spec{R}$.
If 2 is invertible in $R$ and 
$\alpha$ is a square modulo $\beta$, then the ramification divisor of
the symbol algebra $(\alpha,\beta)_2$ is a subset of the divisor of
$\alpha$. 
For any prime number
$q$ that is invertible in $R$, if $R$ contains a primitive $q$th root of
unity and
$\alpha$ is a $q$th power modulo $\beta$, then the ramification divisor of
the symbol algebra $(\alpha,\beta)_q$ is a subset of the divisor of
$\alpha$. 
\end{theorem}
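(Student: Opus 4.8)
The plan is to analyze the ramification of the symbol algebra $(\alpha,\beta)_q$ at each height-one prime $\mathfrak{p}$ of $R$ using the residue (ramification) map of the Artin-Mumford/Grothendieck formalism, and to show that the residue vanishes whenever $\mathfrak{p}$ does not divide $\alpha$. Recall that for a regular integral domain $R$ containing a primitive $q$th root of unity, with $q$ invertible, the Brauer class of $(\alpha,\beta)_q$ in ${_q\B(K)}$ has a ramification at $\mathfrak{p}$ measured by a map into $\HH^1(k(\mathfrak{p}),\mathbb{Z}/q)$, and the local description of this residue is the classical tame symbol. Concretely, for the symbol algebra the residue at a discrete valuation $v$ associated to $\mathfrak{p}$ sends $(\alpha,\beta)_q$ to the class of the Kummer extension determined by the image of
\[
(-1)^{v(\alpha)v(\beta)}\,\frac{\alpha^{v(\beta)}}{\beta^{v(\alpha)}}
\]
in $k(\mathfrak{p})^\ast/(k(\mathfrak{p})^\ast)^q$. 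The ramification divisor is by definition the set of $\mathfrak{p}\in X_1$ where this residue is nonzero.

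\emph{First} I would fix a height-one prime $\mathfrak{p}$ with $v=v_{\mathfrak{p}}$ and suppose $\mathfrak{p}$ is \emph{not} in the divisor of $\alpha$, i.e. $v(\alpha)=0$, and show the residue vanishes there; this establishes that the ramification locus is contained in the support of $\alpha$. With $v(\alpha)=0$ the tame symbol above reduces to the class of $\alpha^{v(\beta)}$ modulo $q$th powers in the residue field, so I must argue $\alpha$ becomes a $q$th power in $k(\mathfrak{p})$. \emph{This is where the hypothesis that $\alpha$ is a $q$th power modulo $\beta$ enters.} The congruence $\alpha \equiv \gamma^q \pmod{\beta}$ means $\alpha - \gamma^q \in (\beta)$, so at any prime $\mathfrak{p}$ dividing $\beta$ we get $\alpha \equiv \gamma^q$ in $k(\mathfrak{p})$, killing the symbol. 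At primes $\mathfrak{p}$ not dividing $\beta$ we have $v(\beta)=0$ as well, so the tame symbol is trivially $1$. Thus in either case the residue at a prime outside the divisor of $\alpha$ is trivial, which is exactly the claim.

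\emph{The main obstacle} I anticipate is handling the primes $\mathfrak{p}$ that divide \emph{both} $\alpha$ and $\beta$, together with the case $q=2$ stated separately (where the primitive root $-1$ already lies in $R$ and only invertibility of $2$ is assumed). These common-divisor primes lie in the divisor of $\alpha$, so they are \emph{permitted} in the ramification divisor and need not be shown unramified; nonetheless I should be careful that the definition of ``ramification divisor'' and the tame-symbol computation are set up so that no prime outside $V(\alpha)$ is accidentally included. The remaining care is purely local: one must confirm that the residue map on ${_q\B(K)}$ restricted to a symbol algebra is given by the tame symbol formula, which is the standard computation for cyclic/symbol algebras via the crossed-product description $(K(\alpha^{1/q})/K,\sigma,\beta)$ recalled in the excerpt. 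Once that local formula is in hand, the global statement follows by checking it prime-by-prime over $X_1$, and the two cases ($q=2$ and general odd prime $q$ with a primitive root in $R$) are treated identically since both supply the data needed to identify $\mu_q$ with $\mathbb{Z}/q$.
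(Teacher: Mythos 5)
Your proposal is correct and follows essentially the same route as the paper's own proof: both reduce the claim to the tame symbol formula \eqref{eq3} and check prime-by-prime that at any $\mathfrak{p} \in X_1$ with $\nu(\alpha)=0$ the residue is killed, either trivially (when $\nu(\beta)=0$) or because the congruence $\alpha \equiv \gamma^q \pmod{\beta}$ makes $\alpha$ a $q$th power in $k(\mathfrak{p})$ (when $\nu(\beta)>0$). Your additional remarks on primes dividing both $\alpha$ and $\beta$, and on the $q=2$ case, are consistent with the paper and require no change.
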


Before proving Theorem~\ref{th3}, we review the theory underlying the
definition of the ramification divisor of a division algebra.

Given a  finite dimensional central $K$-division algebra $D$, it is
possible to measure the ramification of $D$ at any point $x \in X_1$.
The local ring $\ScrO_{X,x}$ at $x$ is a discrete valuation ring.
Let $\nu$ be the discrete
rank-1 valuation on $K$ corresponding to the local ring $\ScrO_{X,x}$.
Let $k(x)$ denote the residue field at $x$.
Assume that $k(x)$ is perfect. (If $k(x)$ is not perfect, the following
still works if $(D:K)$ is prime to the characteristic of $k(x)$.)
The theory of maximal orders \cite[Section~5.7]{S:TV} associates to $D$ a
cyclic extension $L$ of $k(x)$.
Let $K^{\nu}$ be the completion of $K$ and $D^{\nu}$ the division algebra
component of $D \otimes K^{\nu}$. Let $A$ be a maximal order for $D^{\nu}$
in the complete local ring $\ScrO^{\nu}_{X,x}$ and let $A(x) = A \otimes
k(x)$ be the algebra of residue classes. Then $A(x)$ is a central simple
algebra over $L$ for some cyclic Galois extension $L/k(x)$. The cyclic
extension $L/k(x)$ represents a class in $\HH^1(k(x),\mathbb Q/\mathbb Z)$. 
%%It will not be needed later, but we can say more about $D^{\nu}$.
The Brauer group of the field $K^{\nu}$ factors into
\[
\B(K^ \nu) \cong \B(k(x)) \oplus \HH^1(k(x),\mathbb Q/ \mathbb Z) \text{ .}
\]
Every division algebra $D^ \nu$ has a factorization $D^ \nu = D_u \otimes
(L^ \nu / K^ \nu, \sigma, \pi)$. The division algebra $D_u$ has the
property that the corresponding algebra of residue classes $A_u \otimes
k(x)$ is a $k(x)$-central division algebra hence represents a class in
$B(k(x))$. Every cyclic Galois extension of $k(x)$ is the algebra of
residue classes for a cyclic Galois extension $L^ \nu / K^ \nu$ (with group
$\langle \sigma \rangle$). The factorization of $D^ \nu$ is unique up to
choice of local parameter $\pi$.

The assignment $D \mapsto L$ induces a group homomorphism
\begin{equation}
\label{eq1}
\B(K) \to \HH^1(k(x),\mathbb Q/\mathbb Z)
\end{equation}
for each discrete rank-1 valuation $\nu$ on $K$ corresponding to a point $x
\in X_1$. We call $L$ the ramification of $D$ along $x$.
The algebra $D$ will ramify at only finitely many $x \in X_1$. Those $x$
for which the cyclic extension $L/k(x)$ is nontrivial make up the so-called
ramification divisor of $D$. So \eqref{eq1} induces a homomorphism
\begin{equation}
\label{eq2}
\B(K) \xrightarrow{a} \bigoplus_{x \in X_1} \HH^1(k(x),\mathbb Q/\mathbb Z) \text{ .}
\end{equation}

Let $n$ be a positive integer. If $K$ and $k(x)$ both contain $1/n$ and a
primitive $n$th root of unity $\zeta$, this homomorphism agrees with the
tame symbol. On the symbol algebra $(\alpha,\beta)_n$ over $K$, the value
of the homomorphism  \eqref{eq1} is the cyclic extension $L/k(x)$ which is
obtained by adjoining the $n$th root of
\begin{equation}
\label{eq3}
(-1)^{\nu(\alpha)\nu(\beta)}\alpha^{\nu(\beta)}/\beta^{\nu(\alpha)}
\end{equation}
to $k(x)$.
The divisor of $\alpha$ is the set of $x\in X_1$ where $\alpha$ has nonzero
valuation. 

\begin{proof}[Proof of Theorem~\ref{th3}]
By \eqref{eq3}, the ramification divisor of the
symbol algebra $(\alpha,\beta)_q$ is a subset of the set of all
prime divisors 
$x\in X_1$ where $\alpha$ or $\beta$ has nonzero valuation. 
Assume $\alpha$ is a $q$th power modulo $\beta$. Suppose at the prime divisor
$x\in X_1$,  $\nu(\alpha)=0$ and $\nu(\beta)>0$. Then $\alpha$ is a $q$th
power in the residue field $k(x)$ hence \eqref{eq3} defines a trivial
cyclic extension. 
\end{proof}

Let $K$ be an algebraic number field. If the direct sum in
\eqref{eq2} is taken over all primes $x$ of $K$, both finite and infinite,
then the Hasse Principle says the map $a$ in \eqref{eq2} is injective. For
general $K$, this will not be the case.

The Quadratic Reciprocity Formula \eqref{ab1} arises by looking at the
cokernel of the ramification map $a$. For general $K$ the description of
$\coker{a}$ will not lead to a $q$th degree reciprocity formula that is of
much practical importance. But in many instances there are
descriptions of $\coker{a}$ that have useful interpretations.

\section{The One-dimensional Case.}
Let us review how the quadratic reciprocity formula for integers follows
{f}rom a description of $\coker{a}$ in \eqref{eq2}.
If $K$ is a global field, then the residue field $k(x)$ is a
finite field (if the prime $x$ is finite) or $\mathbb R$ or $\mathbb C$ (if $x$
is an infinite prime). Now $\HH^1(\mathbb R, \mathbb Q/\mathbb Z) = \mathbb Z/2$ and
$\HH^1(\mathbb C, \mathbb Q/\mathbb Z) = 0$. If $k(x)$ is a finite field, then the
Galois group of $k(x)$ is isomorphic to $\hat{\mathbb Z}$, the profinite
completion of $\mathbb Z$, hence $\HH^1(k(x), \mathbb
Q/\mathbb Z) = \mathbb Q/ \mathbb Z$. The group $\HH^1(\mathbb R, \mathbb Q/\mathbb Z)$ is
identified with the subgroup of order 2 in $\mathbb Q/\mathbb Z$. So for any
prime $x$ of $K$ there is a homomorphism $\HH^1(k(x), \mathbb Q/\mathbb Z) \to
\mathbb Q/\mathbb Z$. From Class Field Theory, the sequence
\begin{equation}
\label{eq5}
0 \to \B(K) \xrightarrow{a} \bigoplus_{x \in X_1} \HH^1(k(x),\mathbb Q/\mathbb Z)
\xrightarrow{r}  \mathbb
Q/\mathbb Z \to 0
\end{equation}
is exact, where $K$ is any global field and $X_1$ denotes the set of all
primes of $K$.

The Quadratic Reciprocity Formula \eqref{ab1} follows from a computation of
$r \circ a$ applied to a symbol algebra $(p,q)_2$ when $K = \mathbb Q$. Let
$p$ and $q$ be distinct odd prime numbers. Consider the algebra $(p,q)_2$
over $\mathbb Q$. The tame symbol \eqref{eq3} will be trivial at every odd
prime different from $p$ and $q$ because $\nu(p)=\nu(q)=0$. At the prime
$p$, the symbol \eqref{eq3} is $1/q$ and the residue field is $\mathbb Z/p$.
So the ramification is trivial if and only if $q$ is a square modulo $p$.
That is, the ramification (written multiplicatively) is $(q/p)$. Likewise,
at the prime $q$, the ramification is $(p/q)$.
At the infinite prime the complete local ring is the field $\mathbb R$. Since
$p$ and $q$ are both positive integers the algebra $(p,q)_2$ is split over
$\mathbb R$ hence is unramified at this prime.
The only other prime where
ramification can occur is the prime 2. This is the wild case, and the tame
symbol does not apply. It turns out that the ramification over $\mathbb Z/2$
is given by the formula
$(-1)^{\frac{p-1}{2}\frac{q-1}{2}}$. Because the composite map $r \circ a$
applied to a symbol algebra $(p,q)_2$ is 0, the formula \eqref{ab1} holds.
For more details, the reader is referred to \cite[XIV, section~4]{MR554237}.

The proof of the next result of Grothendieck can be found
in \cite[Proposition~2.1]{G:GBIII} or \cite[p. 107, Example~2.22,
case(a)]{M:EC}.

\begin{theorem}
\label{th1}
Let $X$ be a regular integral scheme of dimension 1. Let $K = K(X)$ be the
stalk at the generic point of $X$ and $X_1$ the set of closed points of
$X$. Suppose that for each $x \in X_1$, the residue field $k(x)$ is
perfect. Then there is an exact sequence
%\latexhtml{
\begin{multline}
\label{eq7}
0 \to \HH^2(X,\mathbb G_m) \to \HH^2(K,\mathbb G_{m,K}) \xrightarrow{a} \bigoplus_{x \in X_1}
\HH^1(k(x),\mathbb Q/ \mathbb Z) \xrightarrow{r} \HH^3(X,\mathbb G_m) \\ \to \HH^3(K,\mathbb G_{m,K})
\text{ .}
\end{multline}
% }{
% \begin{equation}
% \label{eq7}
% 0 \to \HH^2(X,\mathbb G_m) \to \HH^2(K,\mathbb G_{m,K}) \xrightarrow{a} \coprod_{x \in X_1}
% \HH^1(k(x),\mathbb Q/ \mathbb Z) \xrightarrow{r} \HH^3(X,\mathbb G_m) \to \HH^3(K,\mathbb G_{m,K})
% \text{ .}
% \end{equation}
% }
If we do not assume the residue fields are perfect, the sequence is still
exact for the $q$-primary components of the groups, for any prime $q$
distinct from the residue characteristics of $X$.
\end{theorem}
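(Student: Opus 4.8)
The plan is to study the sheaf $\mathbb{G}_{m}$ on $X$ through the inclusion $j\colon \Spec K \to X$ of the generic point, together with the divisor short exact sequence, and then to collapse the Leray spectral sequence of $j$. Because $X$ is regular of dimension $1$, every local ring $\ScrO_{X,x}$ is a discrete valuation ring, so the valuation maps assemble (étale locally, on strict henselizations) into a short exact sequence of étale sheaves
\begin{equation*}
0 \to \mathbb{G}_{m} \to j_{\ast}\mathbb{G}_{m,K} \to \bigoplus_{x\in X_{1}}(i_{x})_{\ast}\mathbb{Z} \to 0,
\end{equation*}
where $i_{x}\colon \Spec k(x) \to X$ is the inclusion of the closed point $x$ and $(i_{x})_{\ast}\mathbb{Z}$ is the pushforward of the constant sheaf recording the value group. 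I would take the long exact cohomology sequence of this sequence and feed in two computations: the cohomology of the right-hand divisor sheaf, and a comparison of $\HH^{n}(X,j_{\ast}\mathbb{G}_{m,K})$ with $\HH^{n}(K,\mathbb{G}_{m,K})$.

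For the divisor term, since each $i_{x}$ is a closed immersion with finite fibers, $(i_{x})_{\ast}$ is exact and $\HH^{n}(X,(i_{x})_{\ast}\mathbb{Z}) = \HH^{n}(k(x),\mathbb{Z})$. From the sequence $0\to\mathbb{Z}\to\mathbb{Q}\to\mathbb{Q}/\mathbb{Z}\to 0$ and the vanishing of $\HH^{n}(k(x),\mathbb{Q})$ for $n\geq 1$ (the coefficient group is uniquely divisible), I obtain $\HH^{1}(k(x),\mathbb{Z})=0$ and $\HH^{n}(k(x),\mathbb{Z})\cong\HH^{n-1}(k(x),\mathbb{Q}/\mathbb{Z})$ for $n\geq 2$. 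In particular the degree-$2$ cohomology of the divisor sheaf is $\bigoplus_{x}\HH^{1}(k(x),\mathbb{Q}/\mathbb{Z})$, which is the middle term of \eqref{eq7}, while its degree-$1$ cohomology vanishes.

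The heart of the argument, and the step I expect to be the main obstacle, is the comparison $\HH^{n}(X,j_{\ast}\mathbb{G}_{m,K})\cong\HH^{n}(K,\mathbb{G}_{m,K})$. This follows from the Leray spectral sequence $\HH^{p}(X,R^{q}j_{\ast}\mathbb{G}_{m,K})\Rightarrow\HH^{p+q}(K,\mathbb{G}_{m,K})$ once I show $R^{q}j_{\ast}\mathbb{G}_{m,K}=0$ for $q\geq 1$. This sheaf is supported on closed points, and its stalk at a geometric point over $x$ is $\HH^{q}(\widetilde{K}_{x},\mathbb{G}_{m})$, where $\widetilde{K}_{x}$ is the fraction field of the strict henselization of $\ScrO_{X,x}$, a strictly henselian discrete valuation ring with separably closed residue field. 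For $q=1$ this stalk vanishes by Hilbert's Theorem 90. For $q\geq 2$ I would reduce to torsion coefficients through the Kummer sequences and use that the absolute Galois group of $\widetilde{K}_{x}$ is the inertia group, an extension of a procyclic tame part by a pro-$p$ wild part, so that $\widetilde{K}_{x}$ has cohomological dimension at most $1$ at every prime different from the residue characteristic $p$; this forces the prime-to-$p$ part of $\HH^{q}(\widetilde{K}_{x},\mathbb{G}_{m})$ to vanish for $q\geq 2$. This is exactly where the hypotheses enter: when $k(x)$ is perfect the residue field of $\widetilde{K}_{x}$ is algebraically closed and, using Artin--Schreier theory in equal characteristic, the dimension bound holds at all primes, giving full vanishing and the exact sequence with $\mathbb{Q}/\mathbb{Z}$ coefficients; when $k(x)$ is only separably closed the bound survives merely for primes $q$ distinct from $p$, which is precisely the restriction in the final sentence of the statement.

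With these inputs, substituting $\HH^{n}(X,j_{\ast}\mathbb{G}_{m,K})\cong\HH^{n}(K,\mathbb{G}_{m,K})$ into the long exact sequence of the divisor sequence yields the exactness of \eqref{eq7} in degrees $2$ and $3$. The leading zero comes from the vanishing of $\HH^{1}(K,\mathbb{G}_{m,K})$ (Hilbert's Theorem 90) and of $\bigoplus_{x}\HH^{1}(k(x),\mathbb{Z})$ computed above, which together force $\HH^{2}(X,\mathbb{G}_{m})\to\HH^{2}(K,\mathbb{G}_{m,K})$ to be injective. Finally I would verify that the induced map $a$ into $\bigoplus_{x}\HH^{1}(k(x),\mathbb{Q}/\mathbb{Z})$ coincides with the ramification map of \eqref{eq2}: this is a local check at each discrete valuation, identifying the boundary map of the divisor sequence with the tame symbol, which completes the identification of \eqref{eq7}.
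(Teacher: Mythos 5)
Your skeleton --- the divisor sequence of \'etale sheaves $0 \to \mathbb G_m \to j_\ast\mathbb G_{m,K} \to \bigoplus_x (i_x)_\ast\mathbb Z \to 0$, the identification $\HH^n(k(x),\mathbb Z)\cong\HH^{n-1}(k(x),\mathbb Q/\mathbb Z)$ for $n\ge 2$, and the collapse of the Leray spectral sequence for $j$ via $R^qj_\ast\mathbb G_{m,K}=0$ --- is exactly the argument of the sources the paper cites for this theorem (Grothendieck, Brauer III, Proposition 2.1, and Milne, Example 2.22(a)), so the architecture is sound. The genuine gap is at the step you yourself single out as the heart of the matter: your proof of $\HH^q(\widetilde{K}_x,\mathbb G_m)=0$ for $q\ge 2$ establishes only the prime-to-$p$ vanishing, i.e.\ only the last sentence of the theorem, and the claim that perfectness of $k(x)$ plus Artin--Schreier theory upgrades this to all primes does not work as stated. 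In equal characteristic $p$, the Kummer sequence at the prime $p$ is not exact as a sequence of Galois modules (the $p$-power map on $(\widetilde{K}_x^{\mathrm{sep}})^\ast$ is not surjective, because a separably closed field need not be perfect), so the bound $\operatorname{cd}_p\le 1$ --- which does hold for every field of characteristic $p$ --- cannot be pushed through Kummer theory to kill the $p$-primary part of $\HH^2(\widetilde{K}_x,\mathbb G_m)$. Indeed the implication you need is false in general: $K=\mathbb F_p((t))$ satisfies $\operatorname{cd}_p(K)\le 1$, yet the $p$-torsion of $\B(K)$ is nonzero by local class field theory. In mixed characteristic the argument breaks at $p$ for a different reason: the wild inertia subgroup is pro-$p$, so your description of the Galois group as an extension of a procyclic tame part by a pro-$p$ part gives no bound at all on $\operatorname{cd}_p$, and ``Artin--Schreier theory in equal characteristic'' is silent here. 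This mixed case is precisely the one needed for the paper's motivating application, $X=\Spec{\mathbb Z}$ and quadratic reciprocity.

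What is missing is the actual role of the perfectness hypothesis in the cited proofs, namely Lang's theorem. Since $k(x)$ is perfect and the residue field of $\widetilde{K}_x$ is its separable closure, that residue field is algebraically closed; hence the completion of $\widetilde{K}_x$ is a $C_1$-field by Lang, so $\B(L)=0$ for every finite extension $L$ of $\widetilde{K}_x$. This kills the whole Brauer group (every primary component, in either characteristic), and by Serre's criterion it shows $\widetilde{K}_x$ has dimension $\le 1$, hence $\operatorname{cd}(\widetilde{K}_x)\le 1$ at every prime, which handles degrees $q\ge 3$ as well. Alternatively, for the Brauer group one can quote Witt's decomposition $\B(K^{\nu})\cong\B(k(x))\oplus\HH^1(k(x),\mathbb Q/\mathbb Z)$, valid for a complete discretely valued field with perfect residue field, which the paper itself recalls in Section 1: both summands vanish when the residue field is algebraically closed. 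With that input, together with the fact that $\HH^q(\,\cdot\,,\mathbb G_m)$ of a field is torsion in degrees $\ge 2$, your prime-by-prime d\'evissage closes and the proof is complete; without it, your argument proves only the $q$-primary exactness for $q$ prime to the residue characteristics.
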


The first 2 groups in \eqref{eq7} are the Brauer groups of $X$ and $K$
respectively. The map $a$ in \eqref{eq7} is  ``the ramification map''
\eqref{eq2}.
The fact that in \eqref{eq7} $r \circ a $ is the zero map can be thought of
as a quadratic reciprocity law for elements of order 2, or a $q$th degree
reciprocity law for elements of order $q$.
But to have practical implications, one
must know that  $\HH^1(k(x),\mathbb Q/ \mathbb Z) \xrightarrow{r} \HH^3(X,\mathbb G_m)$ is
injective for some $x \in X_1$. We will prove a lemma showing this is the
case when $X$ is the projective line over a field $k$ and  $x$ is a point
with residue field $k$.

Let $k$ be a field with characteristic $p$ ($p=0$ is allowed)
and $X = \mathbb P^1_k =
\Proj{k[x_0,x_1]}$. Let $x$ be a closed point of $X$ with residue field
$k(x) = k$. There is an open cover of $X$ by the affine sets
$\Spec{k[x_0/x_1]}$,  $\Spec{k[x_1/x_0]}$. The Mayer-Vietoris sequence (for
the \'etale topology and the sheaf of units $\mathbb G_m$) is \cite[p.
110]{M:EC}
%\latexhtml{
\begin{multline}
\label{eq6}
1 \to \HH^0(X,\mathbb G_m) \to  k[x_0/x_1]^\ast \times  k[x_1/x_0]^\ast \to
k[x_0/x_1,x_1/x_0]^\ast  \\
\to \Pic{X} \to \Pic{k[x_0/x_1]} \oplus \Pic{k[x_1/x_0]} \to
\Pic{k[x_0/x_1,x_1/x_0]} \\
\to \B(X) \to \B(k[x_0/x_1]) \oplus \B(k[x_1/x_0]) \to
\B(k[x_0/x_1,x_1/x_0]) \\
\to \HH^3(X,\mathbb G_m) \to \HH^3(k[x_0/x_1],\mathbb G_m) \oplus
\HH^3(k[x_1/x_0],\mathbb G_m)
\to \dots  \text{ .}
\end{multline}
% }{
% \begin{equation}
% \label{eq6}
% \begin{split}
% 1 \to \HH^0(X,\mathbb G_m) \to  k[x_0/x_1]^\ast \times  k[x_1/x_0]^\ast \to
% k[x_0/x_1,x_1/x_0]^\ast  
% \to \Pic{X} \to \Pic{k[x_0/x_1]} \oplus \Pic{k[x_1/x_0]} \to
% \Pic{k[x_0/x_1,x_1/x_0]} \\
% \to \B(X) \to \B(k[x_0/x_1]) \oplus \B(k[x_1/x_0]) \to
% \B(k[x_0/x_1,x_1/x_0]) 
% \to \HH^3(X,\mathbb G_m) \to \HH^3(k[x_0/x_1],\mathbb G_m) \oplus
% \HH^3(k[x_1/x_0],\mathbb G_m)
% \to \dots  \text{ .}
% \end{split}
% \end{equation}
% }
(Actually we only need the 4 Brauer group terms and the first $\HH^3$ term but
include the rest for curiosity's sake.) We write $(\cdot)^\ast$ for the group
of units in a ring.
Since $k$ is a field, $k[T]^\ast = k^\ast$ so
$\HH^0(X,\mathbb G_m) = k^\ast$
and $k[T,T^{-1}]^\ast  = k^\ast  \times \langle T \rangle \cong k^\ast  \times \mathbb
Z$. Since $k[T]$ is factorial, $\Pic{k[T]} = \Pic{k[T,T^{-1}]} = 0$.
Therefore $\Pic{X} \cong \mathbb Z$ and is generated by the divisor class of
the closed point $x$ associated to some linear form $ax_0+bx_1$. From
\cite[p. 164, 6)]{MR770588}
if $n$ is a positive integer not divisible by $p$, then
${_n\B(k[T])} = {_n\B(k)}$ hence ${_n\B(X)} = {_n\B(k)}$.
By \cite[Theorem~2.4]{MR770588} ${_n\B(k[T,T^{-1}])} \cong {_n\B(k)} \oplus
{_n\HH^1(k,\mathbb Q/ \mathbb Z)}$.
We write ${_n(\cdot)}$ for the subgroup annihilated by $n$.
The group ${_n\HH^1(k(x),\mathbb Q/ \mathbb Z)}$
measures the Galois extensions of the field $k(x)$ with group $\mathbb Z/n$
\cite[III, section 4]{M:EC}.
A section to the  epimorphism
${_n\B(k[T,T^{-1}])} \to {_n\HH^1(k,\mathbb Q/ \mathbb Z)}$ is defined by
mapping a cyclic Galois extension $L/k$ with Galois group $\langle
\sigma \rangle$ to the Brauer class of the
cyclic crossed product algebra \cite[Section~30]{R:MO}
$(L(T)/k(T),\sigma,T)$. This algebra is unramified on $\Spec{k[T,T^{-1}]}$
so by \eqref{eq7} represents a class in $\B(k[T,T^{-1}])$.

The reader may wish to compare our computation of $\B(k(y))$ to that given
in \cite[Theorem, p. 51]{FS:Bgrff} and \cite[Proposition~4.1]{AB:Bgd}.
Because we want a $q$th degree reciprocity law, it is necessary to include the
point at infinity, and know that the Gysin map of Lemma~\ref{lem1} is
an injection at that point.

\begin{lemma}
\label{lem1}
Let $k$ be a field and $n$ a positive integer
invertible in $k$. Let $x$ be a closed point of $X = \mathbb P^1_k$ with
residue field $k(x) = k$.
There exists a natural Gysin map
\[
{_n\HH^1(k(x),\mathbb Q/ \mathbb Z)} \xrightarrow{r} {_n\HH^3(X,\mathbb G_m)}
\]
which is injective.
\end{lemma}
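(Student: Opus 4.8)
The plan is to extract the injectivity of $r$ from the Mayer--Vietoris sequence \eqref{eq6} together with the torsion computations of the Brauer groups of $k[T]$, $k[T^{-1}]$ and $k[T,T^{-1}]$ recorded above. Choose homogeneous coordinates so that $x$ is the point $\{x_0=0\}$ and put $T=x_0/x_1$; then $x$ is the origin of the chart $\Spec k[T]$, the two standard charts of $X=\mathbb P^1_k$ are $\Spec k[T]$ and $\Spec k[T^{-1}]$, and their overlap is $\Spec k[T,T^{-1}]$. Restricting the four Brauer terms and the first $\HH^3$ term of \eqref{eq6} to $n$-torsion and substituting ${_n\B(X)}={_n\B(k)}$, ${_n\B(k[T])}={_n\B(k[T^{-1}])}={_n\B(k)}$ and ${_n\B(k[T,T^{-1}])}\cong{_n\B(k)}\oplus{_n\HH^1(k,\mathbb Q/\mathbb Z)}$ turns the relevant segment into
\[
{_n\B(k)}\oplus{_n\B(k)}\xrightarrow{\ \delta\ }{_n\B(k)}\oplus{_n\HH^1(k,\mathbb Q/\mathbb Z)}\xrightarrow{\ \partial\ }{_n\HH^3(X,\mathbb G_m)}.
\]
I will identify the Gysin map $r$ of the lemma with $\partial$ precomposed with the section $\theta\colon{_n\HH^1(k,\mathbb Q/\mathbb Z)}\to{_n\B(k[T,T^{-1}])}$, $L\mapsto[(L(T)/k(T),\sigma,T)]$, constructed above.

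First I would compute $\ker\partial$ on $n$-torsion. By exactness this kernel is the image of $\delta$, the difference of the two restriction homomorphisms. Any class in ${_n\B(k[T])}$ or ${_n\B(k[T^{-1}])}$ is constant, hence unramified on all of $X$; in particular its residue at $T=0$ vanishes, so it lands in the summand ${_n\B(k)}$. For the second chart the residue at $T=0$ also vanishes without a torsion hypothesis: a class in $\B(k[T^{-1}])$ is regular on $\mathbb P^1\setminus\{T=0\}$, so $T=0$ is its only possible ramification point, and the Faddeev residue-sum relation forces that residue to be zero. Therefore $\im\delta={_n\B(k)}\oplus 0$, and this is exactly $\ker\partial$. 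Since $\theta$ is a section of the projection onto the ${_n\HH^1}$-summand, its image meets ${_n\B(k)}\oplus 0$ only in zero; hence $\partial$ is injective on $\theta\bigl({_n\HH^1(k,\mathbb Q/\mathbb Z)}\bigr)$, which is the desired injectivity of $r$.

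The main obstacle is the identification of $\partial\circ\theta$ with the Gysin map $r$ of \eqref{eq7}. By the tame symbol formula \eqref{eq3}, the algebra $(L(T)/k(T),\sigma,T)$ has ramification $L$ at $x=\{T=0\}$ and is unramified elsewhere on the chart $\Spec k[T]$, so $\theta(L)$ carries precisely the local datum attached to $x$; what must be checked is that the Mayer--Vietoris connecting map sends this datum to $r(L)$ rather than to some combination also involving the point at infinity. The cleanest way to settle this, and the conceptual reason the lemma holds, is the localization (Gysin) sequence for the closed point $x\hookrightarrow X$ with open complement $U=X\setminus\{x\}=\mathbb A^1_k$: purity identifies $\HH^3_x(X,\mathbb G_m)$ with $\HH^1(k(x),\mathbb Q/\mathbb Z)$ on $n$-torsion, the map $r$ becomes the forget-supports map $\HH^3_x(X,\mathbb G_m)\to\HH^3(X,\mathbb G_m)$, and its injectivity is equivalent to the surjectivity of the restriction ${_n\B(X)}\to{_n\B(\mathbb A^1_k)}$. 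That restriction is the identity on ${_n\B(k)}$, so the obstruction term ${_n\HH^2(U,\mathbb G_m)}\to\HH^3_x(X,\mathbb G_m)$ vanishes and $r$ is injective. Verifying that this purity-theoretic $r$ agrees with the ramification map of \eqref{eq7}, and matches $\partial\circ\theta$, is the one compatibility that requires care; once it is in hand the lemma follows.
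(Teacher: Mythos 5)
Your proposal is correct and follows essentially the same route as the paper: the paper's proof of Lemma~\ref{lem1} consists precisely of reading off injectivity from the Mayer--Vietoris computation (the identifications ${_n\B(k[T])}={_n\B(k)}$, ${_n\B(k[T,T^{-1}])}\cong{_n\B(k)}\oplus{_n\HH^1(k,\mathbb Q/\mathbb Z)}$, and the crossed-product section $\theta$), together with the identification of $r$ with the Gysin map, which the paper delegates to a comparison with \cite[Corollary~2]{F:BgLp} exactly at the compatibility point you flag. Your write-up merely makes explicit the steps the paper compresses into ``the preceding discussion,'' and your purity/localization sketch is a reasonable stand-in for that citation.
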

\begin{proof}
The injectivity follows from the preceding discussion. To see that the map
$r$ is the Gysin map follows from comparing the above computation to that
in \cite[Corollary~2]{F:BgLp}.
\end{proof}

\begin{example}
\label{ex7}
We interpret the above for the projective line over $k=\mathbb R$. Up to
associates, the irreducible polynomials in $\mathbb R[y]$ are of the form
$y-a$ or $(y-a)^2+b^2$ for $a\in\mathbb R$ and $b\in\mathbb R^\ast$. The residue
fields at the prime divisors of $\mathbb P^1_{\mathbb R}$ are $\mathbb R$ and $\mathbb
C$ depending on whether the maximal ideal is generated by a linear or
quadratic polynomial. Therefore 
\[ \HH^1(k(x),\mathbb Q /\mathbb Z) \cong \left\{ 
\begin{array}{ll}
\mathbb Z/2 & \text{if } k(x) = \mathbb R \\
0 & \text{if } k(x) = \mathbb C
\end{array}
\right. \text{ .}
\]
Therefore an algebra class in $\B(\mathbb R(y))$ ramifies only at points of
$\mathbb P^1_{\mathbb R}$  with residue field $\mathbb R$. Combining
Theorem~\ref{th1} and Lemma~\ref{lem1}, we have the exact sequence
\begin{equation}
\label{eq11}
0 \to \B(\mathbb R) \to \B(\mathbb R(y)) \xrightarrow{a} \bigoplus_{x \in\mathbb R\cup\{\infty\}}
 \mathbb Z/2 \xrightarrow{r} \mathbb Z/2 \to 0
\end{equation}
where $r$ is the summation map. If $a_1$ is not equal to $a_2$, then by
\eqref{eq3}, the ramification of the symbol algebra $(y-a_1,y-a_2)_2$ at
the prime $y-a_2$ is equal to $a_2-a_1$. So $(y-a_1,y-a_2)_2$ ramifies at
$y-a_2$ if and only if $a_2-a_1<0$. So $(y-a_1,y-a_2)_2$ ramifies at
exactly one of the 2 primes $y-a_1$ or $y-a_2$. The valuation of a
polynomial $\alpha\in\mathbb R[y]$ at the point at infinity is equal to
$\deg{\alpha}$. The above discussion shows that if $\alpha$ and $\beta$ are
distinct monic irreducible polynomials in $\mathbb R[y]$, then 
\begin{equation}
\label{eq12}
\left(\alpha/\beta\right)\left(\beta/\alpha\right)
=\left(-1\right)^{\deg{\alpha}\deg{\beta}}\text{ .}
\end{equation}
\end{example}

\begin{example}
\label{ex8}
Let $f(y)$ be any polynomial in $\mathbb R[y]$ and set $\alpha=y^2-1$,
$\beta=y+(y^2-1)f(y)$. This example is related to a question that came up
in a seminar
presentation at F.A.U. during the fall semester of 1993  by Jim Brewer,
on the subject of  Linear Control  Theory over a commutative ring.
The problem was to 
show that there exists 
a reachable feedback control system over $\mathbb R[y]$
which is not coefficient assignable.
The problem was reduced to showing
that the polynomial
$\alpha$ is not a square modulo the polynomial
$\beta$.
First note that this is an easy consequence
of the Intermediate Value Theorem from Calculus. 
Since $\beta(\pm 1)=\pm 1$, there exists a real number $\xi$ between -1 and
1 such that $\beta(\xi)=0$. Now $\alpha(\xi) < 0$, hence $\alpha$ is not a
square modulo $\beta$.
Now we prove the same result using \eqref{eq11}.
At infinity, $\alpha$ is a square, hence $(\alpha,\beta)_2$ ramifies only at
prime divisors of $\alpha$ or $\beta$. At the primes $y\pm 1$ dividing
$\alpha$ we see that \eqref{eq3} becomes $\pm 1$. So $(\alpha,\beta)_2$
is ramified at the prime $y+1$ and unramified at the prime $y-1$. But
the exact sequence \eqref{eq11} implies that the ramifications ``sum to
zero''. So $(\alpha,\beta)_2$ ramifies at some prime divisor corresponding
to a zero of $\beta$. By Theorem~\ref{th3}, 
$\alpha$ is not a square modulo $\beta$.
\end{example}

\begin{example}
\label{ex0}
This is a generalization of Example~\ref{ex8}. It comes from
\cite[Lemma~1]{BFKS:Wdr}. 
Let $q$ be a prime number and $k$ any field with characteristic different
{f}rom $q$. Let $\omega$ be
a unit in $k$ which is not a $q$th power. Assume $k$ contains a primitive
$q$th root of unity $\zeta$.

We apply Lemma~\ref{lem1} and Theorem~\ref{th1} to the curve $X= \mathbb P^1_k
= \Proj{k[x_0,x_1]}$. Let $K = K(X)$. Dehomogenize with respect to $x_1$,
set $y = x_0/x_1$ and view $K$ as $k(y)$. Set
\[ \alpha = (y-1)^{q-1}(y-\omega) \]
and
\[ \beta = y + (y-1)^{q-1}(y- \omega)f(y) \]
where $f(y)$ is an arbitrary polynomial in $k[y]$.
We will show that $\alpha$ is not a $q$th power modulo $\beta$. The proof
amounts to forcing a $q$th degree reciprocity law out of Theorem~\ref{th1}.

Consider the symbol algebra
$(\alpha,\beta)_q$ as a class in ${_qB(K)}$.
We show that $(\alpha,\beta)_q$ is nontrivial (is not in $\ker{a}$) and has
nontrivial ramification.  Let $x$ be the closed point of
$X$ where $y=\omega$. At the point $x$, the residue field is $k$ and the
ramification of $(\alpha,\beta)_q$ corresponds to the field extension
$k(1/{\omega}^{1/q})$, which represents an element of order $q$ in
$\HH^1(k(x),\mathbb Q/\mathbb Z)$. By Lemma~\ref{lem1}, $\HH^1(k(x),\mathbb Q/\mathbb Z)
\xrightarrow{r}  \HH^3(X,\mathbb G_m)$ is injective. However in \eqref{eq7}, $r
\circ a$ is 
the zero map. So there is another closed point $x' \not = x$ such that the
symbol algebra $(\alpha,\beta)_q$ ramifies at $x'$. Notice that
$(\alpha,\beta)_q$ is unramified at ``the point at infinity'' corresponding
to $x_1 =0$. This is because when $x_1=0$, $\alpha$ is a $q$th power hence the
tame symbol \eqref{eq3} is a $q$th power. At the point corresponding to the
other prime factor $y-1$ of $\alpha$, we see that $\beta$ is equivalent to
1, hence is a $q$th power. So $(\alpha,\beta)_q$ is unramified at $y-1$ also.
By a process of elimination, the symbol algebra $(\alpha,\beta)_q$
necessarily ramifies at a point corresponding to a prime divisor $g(y)$ of
the polynomial $\beta$. 
By Theorem~\ref{th3},
$\alpha$ is not a
$q$th power modulo $\beta$.
\end{example}

\begin{example}
\label{ex4}
Let $k= \mathbb Q$, $X = \mathbb P^1_k = \Proj{k[x_0,x_1]}$ and $K = K(X)$.
Dehomogenize with respect to $x_1$, set $y=x_0/x_1$ and view  $K = \mathbb
Q(y)$.  Choose $p $ in $\mathbb Z$ such that $p $ is not a square in $\mathbb
Q[y]/(y^2+1)$. Consider the symbol algebra $D=(p,y^2+1)_2$ over $K$. Then
$D$ ramifies at the point $x \in X_1$ where $y^2+1=0$. But this is the only
point where $D$ ramifies. So in the sequence of Theorem~\ref{th1},
the map  $\HH^1(k(x),\mathbb Q/\mathbb Z) \to \HH^3(X,\mathbb G_m)$ is not injective.
\end{example}

\begin{example}
\label{ex5}
Let $k$ be an algebraically closed field of characteristic different from
2. Let $F = k(T)$ where $T$ is an indeterminate. Set $X = \mathbb P^1_F =
\Proj{F[x_0,x_1]}$ and $K = K(X)$. Dehomogenize with respect to $x_1$, set
$y=x_0/x_1$ and view $K$ as $k(T)(y)$. Consider the symbol algebra
$(T,y^2-T(T^2-1))_2$ over $K$. This algebra ramifies at the point $x \in
X_1$ where $y-T(T^2-1)=0$ (the proof is identical to the one given in
Example~\ref{ex1} which follows).
At the point at infinity $x_1=0$, and the symbol
is $(T,x_0^2)_2$ which is split. So in the sequence of Theorem~\ref{th1},
we see that when $k(x) \neq F$ the map $\HH^1(k(x),\mathbb Q/\mathbb Z) \to
\HH^3(X,\mathbb G_m)$ is not injective. 

This example is not fair because the left hand side of the symbol is a unit
on $X$, i.e. is in $\HH^0(X,\mathbb G_m)$. But rotating the equation for the
elliptic curve gives an example which is fair. Consider the symbol algebra
$(y-T, (y+T)^2-(y-T)((y-T)^2-1))_2$ over $K$. This algebra ramifies at the
point $x \in X_1$ where $(y+T)^2-(y-T)((y-T)^2-1)=0$. It is unramified at
the point at infinity on $X$ and at the point where $y-T=0$. So we get the
same conclusion as before.
\end{example}

\section{The Two-dimensional Case.}
In this section we consider some cases where $q$th degree reciprocity works 
for the function field of a 2-dimensional scheme.
All of the results in this section are a consequence of the following
theorem  due to M. Artin and D. Mumford. Throughout this section $k$ is an
algebraically closed field of characteristic $p$ and we always
work modulo $p$-groups ($p=0$ is allowed). In this section $X$ will be a
nonsingular integral algebraic surface over $k$.

\begin{theorem}
\label{th2}
If $X$ is a nonsingular integral surface over $k$ and $K=K(X)$ is the
function field of $X$, then the sequence
\[
0 \to \B(X) \to \B(K) \xrightarrow{a} \bigoplus_{C \in X_1}\HH^1(K(C),\mathbb Q/\mathbb Z)
\xrightarrow{r} \bigoplus_{p \in X_2} \mu(-1) \xrightarrow{S} \HH^4(X,\mu) \to 0
\]
is a complex which is exact except that $\im(a) \not = \ker(r)$ in general.
If $\HH^3(X,\mu)=0$ (true for example if $X$ is affine, or complete and
simply connected), the sequence is exact. The map $a$ is the ``ramification
map'' \eqref{eq2}.
\end{theorem}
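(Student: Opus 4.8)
The plan is to exhibit the displayed four-term complex as a single row of the coniveau (Bloch--Ogus) spectral sequence for the étale cohomology of $X$ with twisted finite coefficients, and to extract the asserted exactness from the Gersten resolution together with the cohomological dimension of function fields over $k$. Since $k$ is algebraically closed and everything is understood modulo $p$-groups, I would first fix an integer $n$ prime to $p$ and prove the $n$-torsion version of the sequence, then pass to the direct limit over $n$ to recover the full torsion groups $\B$, $\mathbb Q/\mathbb Z$, $\mu(-1)$, and $\mu$. The Kummer sequence identifies ${_n\B(K)} \cong \HH^2(K,\mu_n)$ and expresses ${_n\B(X)}$ as the quotient of $\HH^2(X,\mu_n)$ by $\Pic(X)/n$; because $k=\bar k$, a choice of primitive root of unity trivializes all Tate twists, so I may carry the coefficients as $\mu_n^{\otimes 2}$ throughout (so that each residue lowers the twist by one) and freely identify $\HH^1(K(C),\mathbb Q/\mathbb Z)[n] \cong \HH^1(K(C),\mu_n)$ and $\mu(-1)[n] \cong \HH^0(k(p),\mathbb Z/n)$.

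First I would filter $X$ by codimension of support and apply cohomological purity (the Gysin isomorphism for the smooth immersions of points into $X$) to produce the coniveau spectral sequence
\[
E_1^{s,t} = \bigoplus_{x\in X_s}\HH^{t-s}\bigl(k(x),\mu_n^{\otimes(2-s)}\bigr) \Rightarrow \HH^{s+t}(X,\mu_n^{\otimes 2}).
\]
Reading off the row $t=2$ gives precisely
\[
\HH^2(K,\mu_n^{\otimes2}) \xrightarrow{a} \bigoplus_{C\in X_1}\HH^1(K(C),\mu_n) \xrightarrow{r} \bigoplus_{p\in X_2}\HH^0(k(p),\mathbb Z/n),
\]
and the substance of the theorem is that the $d_1$ differentials of this row are the ramification map \eqref{eq2} and the one-dimensional residue of Theorem~\ref{th1}; granting this, the row is a complex with the claimed terms. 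I would then invoke the Bloch--Ogus theorem, which identifies the $E_2$-page with Zariski cohomology of the sheaves $\mathcal H^t$ of étale cohomology, $E_2^{s,t}=\HH^s_{\mathrm{Zar}}(X,\mathcal H^t)$, and in particular presents $\ker(a)=E_2^{0,2}=\HH^0_{\mathrm{Zar}}(X,\mathcal H^2)$ as the group of everywhere-unramified classes, the middle cohomology $\ker(r)/\im(a)=E_2^{1,2}$, and $\coker(r)=E_2^{2,2}$.

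The decisive input is that $k$ is algebraically closed: every function field $k(x)$ has étale cohomological dimension $\mathrm{trdeg}(k(x)/k)\le 2$, so $\mathcal H^t(\mu_n^{\otimes2})=0$ for $t>2$, while $E_2^{s,t}=0$ for $t<s$ (negative-degree cohomology) and for $s>2=\dim X$. These vanishings isolate a single surviving $E_2$-term in each total degree $3$ and $4$, forcing $\HH^3(X,\mu_n^{\otimes2})\cong E_2^{1,2}$ and $\HH^4(X,\mu_n^{\otimes2})\cong E_2^{2,2}$ with no room for nonzero higher differentials. Hence the defect $\ker(r)/\im(a)\cong \HH^3(X,\mu)$, which vanishes exactly when $\HH^3(X,\mu)=0$ (e.g.\ $X$ affine, or complete and simply connected), giving exactness at the $\bigoplus_C$ term; and $\coker(r)\cong \HH^4(X,\mu)$, so the map $S$ is surjective with kernel $\im(r)$. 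For the left end, the same spectral sequence in total degree $2$ gives a two-step filtration whose graded pieces are $\Pic(X)/n$ and $\ker(a)$; after quotienting out $\Pic(X)/n$ by Kummer this yields ${_n\B(X)}\cong\ker(a)$, which simultaneously gives the injectivity $\B(X)\hookrightarrow\B(K)$ and exactness at $\B(K)$.

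The main obstacle is twofold, and both parts are geometric rather than formal. The delicate technical point is the identification of the spectral-sequence differentials $d_1$ with the geometrically defined ramification maps $a$ and $r$: one must check that the residues produced abstractly by purity coincide with the tame symbol of \eqref{eq3} and the Gysin maps of the one-dimensional theory, so that the sequence is the intended arithmetic object and not merely an isomorphic complex. The conceptually central point is the control of the defect by $\HH^3(X,\mu)$, which rests on the vanishing $\mathcal H^t=0$ for $t>2$; this is where the hypothesis $k=\bar k$ is indispensable and where the failure of exactness at $\bigoplus_C$ in general receives its precise cohomological meaning.
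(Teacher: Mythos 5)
Your proposal is correct in outline, but it proves the theorem by a genuinely different route than the paper does: the paper's entire proof is a citation, namely ``combine sequences (3.1) and (3.2) of Artin--Mumford, p.~86,'' so the real argument it leans on is Artin--Mumford's original one, which works with $\mathbb G_m$-valued sheaf sequences on $X_{\text{et}}$ (the Leray spectral sequence for the inclusion of the generic point, together with purity for divisors and points) and then splices two exact sequences together. You instead run the coniveau spectral sequence with finite twisted coefficients and invoke Bloch--Ogus. Your route buys something the cited proof does not state explicitly: the identifications $\ker(r)/\im(a)\cong \HH^3(X,\mu)$ and $\coker(r)\cong \HH^4(X,\mu)$ fall out mechanically from the shape of the $E_1$-page, since for $x\in X_s$ one has $\operatorname{cd}(k(x))=2-s$, so $E_1^{s,t}=0$ for $t>2$, $t<s$, or $s>2$, and every $d_r$ with $r\ge 2$ then has zero source or target; this gives not merely ``exact when $\HH^3(X,\mu)=0$'' but that the defect \emph{is} $\HH^3(X,\mu)$. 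Two refinements are worth noting. First, you do not actually need the Bloch--Ogus theorem ($E_2^{s,t}=\HH^s_{\mathrm{Zar}}(X,\mathcal H^t)$) anywhere: the degeneration and the degree-$3$ and degree-$4$ identifications use only the $E_1$-vanishing above, and the one place you use $E_2^{1,1}\cong\Pic(X)/n$ can be computed by hand as the cokernel of the divisor map $K^\ast/K^{\ast n}\to\bigoplus_{C\in X_1}\mathbb Z/n$. Second, your claim that the total-degree-$2$ analysis ``simultaneously gives'' injectivity of $\B(X)\to\B(K)$ hides a real compatibility: one must know that the Kummer image of $\Pic(X)/n$ in $\HH^2(X,\mu_n)$ coincides with the coniveau subgroup $E_\infty^{1,1}=N^1\HH^2$, which is exactly as hard as the compatibility of $d_1$ with the tame symbol and Gysin maps that you rightly flag as the delicate point; these identifications (carried out, e.g., in Bloch--Ogus or in Colliot-Th\'el\`ene's treatment of the Gersten conjecture) are where the geometric content of Artin--Mumford's argument reappears in your approach.
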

\begin{proof} Follows from combining sequences (3.1) and (3.2) of
\cite[p.~86]{AM:See}.
\end{proof}

In Theorem~\ref{th2}, the fact that $r \circ a$ is the zero map can be
thought of as a quadratic reciprocity law for elements of order 2 or a
$q$th degree reciprocity law for elements of order $q$.  However, as the
following example shows, the map
$r$ sometimes has a nontrivial kernel at a prime divisor $C$.

\begin{example}
\label{ex1}
If $C$ is an irreducible curve on $X$, then the group $\HH^1(C,\mathbb Z/2)$
becomes an obstruction to a quadratic reciprocity law for $K(X)$. For
example, let $X = \mathbb A^2_k = \Spec{k[x,y]}$ and $K = k(x,y)$ where $k$
has characteristic different from 2. Let $f=x$,
$g=y^2-x(x^2-1)$. The Legendre symbols have values $(g/f) = 1$ and $(f/g)
= -1$. Here is a proof that $(f/g)= -1$. The curves $f=0$ and $g=0$
intersect at 2 points: $P$, the point where $x=y=0$ and $Q$, the point at
infinity. Think of $f$ as a function on the elliptic curve $C$ defined by
the equation $g=0$.  The divisor of $f$ on $C$ is $2P-2Q$. If $f$ is a
square on $C$, then $f=h^2$ for some function $h$ on $C$. The divisor of
$h$ is $P-Q$ which is not the divisor of a function because $C$ is not a
rational curve \cite[p. 138]{H:AG}. So the algebra $(f,g)_2$ is nontrivial
over $K$ and ramifies exactly along the elliptic curve $C$ with equation
$g=0$.  The ramification
data along $C$ for $(f,g)_2$ is the extension $K(C)(\sqrt{f})$ which is an
unramified quadratic extension of $K(C)$, hence represents a class in
$\HH^1(C,\mathbb Q/\mathbb Z)$. The ramification divisor of $(f,g)_2$ is the prime
divisor $C$.
\end{example}

The obstruction to a $q$th degree reciprocity law illustrated by
Example~\ref{ex1} is overcome by localizing in the \'etale topology at a
closed point. Let $p \in X_2$ be a closed point on $X$ and let
$\ScrO^h_{X,p}$
denote the henselization of $\ScrO_{X,p}$. Let $K^h$ denote
the quotient field of $\ScrO^h_{X,p}$ and $X^h = \Spec{\ScrO^h_{X,p}}$.
{F}rom the proof of Theorem~\ref{th2}, it follows that the sequence
\begin{equation}
\label{eq9}
0 \to  \B(K^h) \xrightarrow{a} \bigoplus_{C \in X_1}\HH^1(K(C),\mathbb Q/\mathbb Z)
\xrightarrow{r}  \mu(-1) \to 0
\end{equation}
is exact.
The reader is referred to \cite[Theorem~(1.2)]{A:Tdo} for a proof of a
version of \eqref{eq9} in which $X$ is only assumed to be normal with
rational singularities.
In this case, each curve $C \in X_1^h$ is a henselian curve with
1 closed point and $\HH^1(K(C),\mathbb Q/\mathbb Z) \cong \mu(-1)$. The map $r$ is
an isomorphism on each summand \cite[p. 86]{AM:See}. 
The sequence \eqref{eq9} also holds if instead of
henselizing $\ScrO_{X,p}$ we complete with respect to the maximal ideal. 
In particular,  there is the following weak version of a reciprocity formula for
power series in 2 variables over $k$.

\begin{proposition}
\label{prop1}
Let $k$ be an algebraically closed field and $q$ a prime number different
{f}rom the characteristic of $k$. Let $f$ and $g$ be nonzero irreducible
power series in $k[[x,y]]$. If  $f$ is a $q$th power modulo $g$, then the
residue class of $g$ is a $q$th power in the normalization of
$k[[x,y]]/(f)$. There exist functions $s$, $t$ in $k((x,y))$ satisfying
$g-s^q=ft$. 
\end{proposition}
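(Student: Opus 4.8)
The plan is to form the symbol algebra $D = (f,g)_q$ over $K = k((x,y))$ and extract the two $q$th power assertions from its ramification behavior, using that over the complete local ring $R = k[[x,y]]$ the reciprocity map of \eqref{eq9} sees each prime divisor separately. We may assume $f$ and $g$ are non-associate, the associate case being immediate (then $\bar g = 0$ in $R/(f)$ and one takes $s=0$); since $R$ is regular, hence factorial, non-associate irreducibles are coprime. Because $k$ is algebraically closed of characteristic different from $q$, it contains a primitive $q$th root of unity and $q$ is invertible, so $D$ is defined and represents a class in ${_q\B(K)}$.

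First I would confine the ramification. By Theorem~\ref{th3}, the hypothesis that $f$ is a $q$th power modulo $g$ forces the ramification divisor of $D$ to lie inside the divisor of $f$; as $f$ is irreducible this is the single prime divisor $C$ cut out by $f$. Take $X = \Spec R$ with the origin as closed point. Since $R$ is complete, the sequence \eqref{eq9} applies and reads $0 \to \B(K) \xrightarrow{a} \bigoplus_{C' \in X_1}\HH^1(K(C'),\mathbb Q/\mathbb Z) \xrightarrow{r} \mu(-1) \to 0$, where $r$ is an isomorphism on each summand. Because $a(D)$ is supported on the one prime $C$ and $r \circ a = 0$, the injectivity of $r$ on the $C$-summand forces the $C$-component of $a(D)$ to vanish; thus $D$ is unramified along $C$ as well, hence everywhere.

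Next I would convert this into a $q$th power statement by computing the ramification at $C$ explicitly. There the valuation $\nu = \nu_C$ satisfies $\nu(f)=1$ and $\nu(g)=0$, so by \eqref{eq3} the ramification extension of $K(C)$ is obtained by adjoining a $q$th root of $g^{-1}$; its triviality is equivalent to $\bar g$ being a $q$th power in $K(C) = \operatorname{Frac}(R/(f))$, which the previous step establishes. To upgrade from the fraction field to the normalization, observe that $R/(f)$ is a one-dimensional complete local domain with residue field $k$, so its normalization $\tilde A$ is a complete discrete valuation ring finite over it with the same algebraically closed residue field; hence $\tilde A \cong k[[t]]$ and $\operatorname{Frac}(\tilde A) = K(C)$. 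Writing $\bar g = u\,t^m$ with $u$ a unit, a $q$th root in $k((t))$ forces $q \mid m$, and since $q$ is invertible and $k$ is algebraically closed, Hensel's lemma makes $u$ a $q$th power in $k[[t]]$. Therefore $\bar g$ is a $q$th power in $\tilde A$, which is the first conclusion.

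Finally, for the displayed identity I would lift the $q$th root out of $\operatorname{Frac}(R/(f))$. Write $\bar g = \phi^q$ and choose $\phi = \bar s_0/\bar u$ with $s_0, u \in R$ and $u \notin (f)$; then $s = s_0/u \in K$ maps to $\phi$ under the reduction $R[1/u] \to (R/(f))[1/\bar u]$. Consequently $g - s^q$ reduces to $0$, hence lies in $f\cdot R[1/u]$, so $g - s^q = f t$ for some $t \in R[1/u] \subseteq K = k((x,y))$, as required. The main obstacle is the second step: the whole force of the argument lies in passing from the global surface, where $r$ may have a nontrivial kernel on a summand (as Example~\ref{ex1} shows), to the complete local ring, where \eqref{eq9} makes $r$ injective on each summand and thereby turns the confinement of ramification to a single prime into its outright vanishing.
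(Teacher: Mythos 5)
Your proof is correct and follows essentially the same route as the paper's: Theorem~\ref{th3} confines the ramification of $(f,g)_q$ to the single prime $C$ cut out by $f$, the sequence \eqref{eq9} (valid for the completion $k[[x,y]]$) then forces that ramification to vanish because $r\circ a=0$ and $r$ is an isomorphism on each summand, and triviality of the tame symbol $g^{-1}$ at $C$ gives that $\bar g$ is a $q$th power in $K(C)$, whence the displayed identity by clearing denominators. The only cosmetic difference is the final step: where you invoke the structure of the normalization as $k[[t]]$ and Hensel's lemma, the paper simply observes that the $q$th root $s$ satisfies the monic equation $s^q=\bar g$ over $\ScrO(C)$, hence is integral and already lies in the normalization.
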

\begin{proof}
The ramification divisor of the symbol algebra $(f,g)_q$ is a subset of the
divisor of $fg$. By Theorem~\ref{th3}, $(f,g)_q$ ramifies at most along the
divisor $C$ of $f$. By \eqref{eq9}, $(f,g)_q$ is unramified at each prime
divisor. So the tame symbol is a $q$th power. That is, $g$ represents a
$q$th power in the field of fractions $K(C)$ of $\ScrO(C)=k[[x,y]]/(f)$. So
there are elements $s$, $t$ in $k((x,y))$ satisfying $s^q-g=ft$.
The function $s$ represents a class in $K(C)$ that is integral over
$\ScrO(C)$. 
\end{proof}

The next example shows that quadratic reciprocity for power series is
hindered by the fact that the ring $k[[x,y]]/(f)$ is not necessarily
factorial. This problem occurs when the curve defined by $f=0$ is singular.

\begin{example}
\label{ex9}
Let $f=x$, $g=y^2-x^3$ be power series in $k[[x,y]]$ and assume the
characteristic of $k$ is
different from 2. 
Since $\left( g/f \right)=1$, 
by Proposition~\ref{prop1}, $f$ is a square in $K(C)$,
where $C$ is 
the cubic curve  with equation $g=0$. 
That is, there are
functions $s$, $t$ in $k((x,y))$ satisfying $x-s^2=(y^2-x^3)t$.
In fact, one can check that $s=y/x$ and $t= -1/x^2$ work. This equation
also shows that $s$ is integral over $\ScrO(C)$ hence is in the
normalization $\ScrO(\bar{C})$. Now the curve $C$ has a cusp singularity
and $\ScrO(C)$ is non-normal.
Since adjoining $y/x$ to $\ScrO(C)$ results
in a normal ring, we see that $f$ is not a square in $\ScrO(C)$.
\end{example}

\begin{corollary}
\label{cor1}
If, in the context of Proposition~\ref{prop1}, the lowest degree form of
$f$ has degree $\le 1$ and $f$ is a $q$th power modulo $g$,
then $g$ is a $q$th power modulo $f$.
\end{corollary}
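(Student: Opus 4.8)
The plan is to observe that the hypothesis on the lowest degree form of $f$ forces the one-dimensional local ring $\ScrO(C)=k[[x,y]]/(f)$ to be normal, so that the normalization appearing in Proposition~\ref{prop1} can be dispensed with. First I would note that since $f$ is a nonzero non-unit (being irreducible), its lowest degree form cannot have degree $0$, as a nonzero constant term would make $f$ a unit in the local ring $k[[x,y]]$. Hence under the hypothesis the lowest degree form of $f$ has degree exactly $1$. Geometrically this says that the curve $C$ defined by $f=0$ is nonsingular at the closed point of $\Spec{k[[x,y]]}$.

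Next I would make this precise. After a linear change of coordinates I may assume the linear part of $f$ is $x$, so that $\partial f/\partial x$ is a unit, and Weierstrass preparation gives that $f$ is associate to a Weierstrass polynomial $x-\phi(y)$ for some power series $\phi$ with $\phi(0)=0$. The substitution $x\mapsto\phi(y)$ then yields an isomorphism $k[[x,y]]/(f)\cong k[[y]]$. In particular $\ScrO(C)$ is a regular local ring of dimension $1$, hence a discrete valuation ring, and therefore integrally closed in its fraction field $K(C)$. Thus the normalization of $\ScrO(C)$ coincides with $\ScrO(C)$ itself.

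Finally I would feed this into Proposition~\ref{prop1}. By hypothesis $f$ is a $q$th power modulo $g$, so that proposition tells us the residue class of $g$ is a $q$th power in the normalization of $k[[x,y]]/(f)$. Since that normalization is just $k[[x,y]]/(f)$, the residue class of $g$ is already a $q$th power in $k[[x,y]]/(f)$, which is exactly the assertion that $g$ is a $q$th power modulo $f$.

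I do not expect a serious obstacle here: the entire content is the normality of $\ScrO(C)$ at a smooth point, which removes precisely the normalization step that caused the failure in Example~\ref{ex9} (where $f=x$, $g=y^2-x^3$ met at a cusp and $\ScrO(C)$ was non-normal). The only point needing care is ruling out the degenerate degree-$0$ leading form, but this is immediate from the non-invertibility of the irreducible element $f$.
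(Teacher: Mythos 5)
Your proof is correct and follows essentially the same route as the paper's: both reduce to the observation that a linear lowest-degree form makes $\ScrO(C)=k[[x,y]]/(f)$ a discrete valuation ring, hence integrally closed, so the normalization appearing in Proposition~\ref{prop1} is $\ScrO(C)$ itself and the conclusion is immediate. The only cosmetic differences are that you dismiss the degree-$0$ case as vacuous (since the irreducible $f$ is a non-unit) where the paper treats it as trivially true, and you justify nonsingularity via Weierstrass preparation where the paper simply notes that the divisor of $f$ is nonsingular.
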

\begin{proof}
If the lowest degree form of $f$ has degree 0, then $f$ is invertible and
the corollary is true. If the lowest degree form of $f$ is linear, then the
divisor of $f$ is nonsingular so $\ScrO = k[[x,y]]/(f)$ is a discrete
valuation ring. Let $K$ denote the field of fractions of $\ScrO$. If $g$ is
a $q$th power in $K$, then $g$ is a $q$th power in $\ScrO$.
\end{proof}

\begin{example}
\label{ex2}
Consider the polynomials $f=x$, $g=y^2-x(x^2-1)$ from Example~\ref{ex1},
but this time view them as elements of the power series ring $k[[x,y]]$
over $k$.
Since $(g/f)=1$, from Corollary~\ref{cor1} we have $(f/g)=1$.
So $f$ is a square modulo $g$ in $k[[x,y]]$. In other words, there are
power series $s(x,y)$, $t(x,y)$ in $k[[x,y]]$ satisfying the equation
$x -  s ^2 = \left( y^2 - x(x^2-1) \right) t $.
Notice that this is contrary to the value of $(f/g)$ in the polynomial
ring. The reason of course is that the unramified cyclic extensions of the
elliptic curve $k[x,y]/(g)$ have been split by completion. This includes
the extension corresponding to adjoining $\sqrt{x}$.
\end{example}

In order to alleviate the obstruction to $q$th degree reciprocity it is not 
necessary to localize at a closed point in $X_2$. It is sufficient to
localize along a curve $C \in X_1$ such that $\HH^1(C,\mathbb Q/\mathbb Z) = 0$.
For simplicity assume $X = \Spec{R}$ where $R$ is the coordinate ring of an
affine nonsingular integral 2-dimensional variety over $k$. Let $I$ be an
ideal in $R$ such that $R/I$ is reduced and connected. Let
$(\tilde{R},\tilde{I})$ denote the henselization of $R$ along $I$. For the
basic properties of henselian couples, the reader is referred to
\cite{MR0277519}. Denote by $\hat{R}$ the completion of $R$ with respect to the
ideal $I$. Let $\tilde{K}$ be the quotient field of $\tilde{R}$ and
$\hat{K}$ the quotient field of $\hat{R}$. From the proof of
Theorem~\ref{th2} (see \cite{F:Daon}), the sequence
\begin{equation}
\label{eq10}
0 \to \B(\tilde{K}) \xrightarrow{a} \bigoplus_{C \in \tilde{X}_1}\HH^1(K(C),\mathbb Q/\mathbb Z)
\xrightarrow{r} \bigoplus_{p \in \tilde{X}_2} \mu(-1)  \to 0
\end{equation}
is exact where $\tilde{X} = \Spec{\tilde{R}}$.
Sequence \eqref{eq10} is also exact for $\hat{K}$, $\hat{X}$
replacing $\tilde{K}$, $\tilde{X}$. If the curve $R/I$ has the property
that each irreducible component $C$ is simply connected, then $\HH^1(C,\mathbb
Q/\mathbb Z)=0$.
As a special case, consider the following.

\begin{proposition}
\label{prop2}
Let $k$ be an algebraically closed field and $q$ a prime number different
{f}rom the characteristic of $k$. 
Let $f$ and $g$ be nonzero irreducible power series in $y$ with
coefficients that are polynomials in $x$. 
If  $f$ is a $q$th power modulo $g$, then the
residue class of $g$ is a $q$th power in the normalization of
$k[x][[y]]/(f)$. There exist functions $s$, $t$ in the field of fractions of
$k[x][[y]]$ satisfying $g-s^q=ft$. 
\end{proposition}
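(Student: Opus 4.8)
The plan is to imitate the proof of Proposition~\ref{prop1}, substituting the exact sequence \eqref{eq10} for \eqref{eq9}. The first observation is that $k[x][[y]]$ is the completion $\hat R$ of $R=k[x,y]$ along the ideal $I=(y)$, so that $\hat X=\Spec k[x][[y]]$, $\hat K$ is its field of fractions, and \eqref{eq10} applies with $\hat K,\hat X$ in place of $\tilde K,\tilde X$. Since $k$ is algebraically closed it contains a primitive $q$th root of unity, so the symbol algebra $(f,g)_q$ defines a class in ${}_q\B(\hat K)$.

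First I would locate the ramification. By Theorem~\ref{th3}, since $f$ is a $q$th power modulo $g$, the ramification divisor of $(f,g)_q$ is contained in the divisor of $f$; as $f$ is irreducible (and $k[x][[y]]$ is a unique factorization domain, being a formal power series ring over the principal ideal domain $k[x]$) this divisor is the single prime $C=V(f)$ in $\hat X_1$. Writing $a$ and $r$ for the maps in \eqref{eq10}, the class $a((f,g)_q)$ therefore lies in the single summand $\HH^1(K(C),\mathbb Q/\mathbb Z)$, and the relation $r\circ a=0$ forces its image under the local tame symbols $r_C\colon \HH^1(K(C),\mathbb Q/\mathbb Z)\to\bigoplus_{p\in C}\mu(-1)$ to vanish.

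The crux is to show that $r_C$ is injective, equivalently that the curve $C$ admits no nontrivial unramified cyclic (prime-to-$p$) cover, i.e. $\HH^1(C,\mathbb Q/\mathbb Z)=0$. Here I would exploit the fact that $k[x][[y]]$ is $(y)$-adically complete, so that the pair $(k[x][[y]],(y))$ is Henselian, and hence so is the quotient pair $(k[x][[y]]/(f),(y))$. By the rigidity of \'etale cohomology for Henselian pairs one has $\HH^1(C,\mathbb Q/\mathbb Z)\cong\HH^1(C_0,\mathbb Q/\mathbb Z)$, where $C_0=C\cap\{y=0\}=\Spec k[x]/(f(x,0))$. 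Since $f(x,0)\in k[x]$ and $k$ is algebraically closed, $C_0$ is a finite disjoint union of fat points with residue field $k$, so $\HH^1(C_0,\mathbb Q/\mathbb Z)=0$. This is the manifestation, in the present setting, of the requirement that the reduced curve $R/I=\mathbb A^1_k$ be simply connected; it plays the role for \eqref{eq10} that the fact ``each curve through a closed point is a Henselian-local curve'' plays for \eqref{eq9}. I expect this step---verifying that completing along $(y)$ renders $V(f)$ simply connected---to be the main obstacle, and everything else to be formal.

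With $r_C$ injective, the vanishing $r_C(a((f,g)_q))=0$ yields $a((f,g)_q)=0$, so $(f,g)_q$ is unramified along $C$ and hence at every prime divisor. Consequently the tame symbol \eqref{eq3} is trivial at $C$: as $\nu_C(f)=1$ and $\nu_C(g)=0$, it equals $g^{-1}$, so $g$ is a $q$th power in the residue field $K(C)=\operatorname{Frac}(k[x][[y]]/(f))$. Writing $g=s^q$ in $K(C)$, the element $s$ satisfies the monic equation $T^q-g$ over $\ScrO(C)=k[x][[y]]/(f)$ and is therefore integral, so the residue class of $g$ is a $q$th power in the normalization of $k[x][[y]]/(f)$. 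Finally, lifting $s$ to an element of the field of fractions of $k[x][[y]]$ and reducing the relation $s^q-g$ modulo $f$ produces $s,t$ in that field with $g-s^q=ft$, exactly as in the proof of Proposition~\ref{prop1}.
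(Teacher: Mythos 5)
Your proof is correct, and its overall skeleton is the one the paper intends (the paper's own proof ends with ``the rest is as for Proposition~\ref{prop1}''): Theorem~\ref{th3} confines the ramification of $(f,g)_q$ to the single prime $C=V(f)$, the relation $r\circ a=0$ in \eqref{eq10} together with injectivity of $r$ on that summand kills the ramification, triviality of the tame symbol \eqref{eq3} at $C$ says $g$ is a $q$th power in $K(C)$, and integrality of $s$ (a root of $T^q-g$ over $\ScrO(C)$) puts it in the normalization. Where you genuinely diverge is at the crux you correctly isolate, namely $\HH^1(C,\mathbb Q/\mathbb Z)=0$ for every $C\in\hat{X}_1$. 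The paper asserts this from the structure of the prime divisors of $\Spec{k[x][[y]]}$: besides $V(y)=\mathbb A^1_k$ (simply connected prime to $p$), an irreducible $f$ satisfies $f(x,0)=c(x-a)^e$ for a single $a$ (Hensel's lemma lifts coprime factorizations), so $\ScrO(C)=k[x][[y]]/(f)$ is a complete, hence strictly Henselian, local ring with residue field $k$, and such a curve is simply connected; this is the elementary fact behind the phrase ``each curve is a henselian curve with one closed point'' in the discussion of \eqref{eq9}. You instead apply rigidity of \'etale cohomology for the Henselian pair $(k[x][[y]]/(f),(y))$ to identify $\HH^1(C)$ with $\HH^1(C_0)$, where $C_0=\Spec{k[x]/(f(x,0))}$. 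That is a valid and more uniform argument---it treats all divisors at once and would work verbatim for the completion of an affine surface along any ideal whose zero locus has simply connected components, which is exactly the paper's general setting for \eqref{eq10}---but it invokes a heavier theorem (rigidity for Henselian or complete pairs), where the paper needs only triviality of finite \'etale covers of a strictly Henselian local ring.

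Two slips to patch, neither fatal. First, if $f$ is an associate of $y$, then $f(x,0)=0$ and $C_0=\mathbb A^1_k$ is not a finite union of fat points; in that case you must instead quote $\HH^1(\mathbb A^1_k,\mathbb Z/q)=0$ for $q$ prime to the characteristic (this is the paper's remark that $R/I=k[x]$ is simply connected). Second, the kernel of $r_C$ on $\HH^1(K(C),\mathbb Q/\mathbb Z)$ consists of the extensions unramified at all branches, i.e.\ it is $\HH^1(\bar{C},\mathbb Q/\mathbb Z)$ for the \emph{normalization} $\bar{C}$, which in general differs from $\HH^1(C)$ (a nodal curve has $\HH^1(C)\neq 0$ while $\HH^1(\bar{C})=0$), so ``equivalently'' is too strong. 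The implication you actually need, $\HH^1(C)=0\Rightarrow\HH^1(\bar{C})=0$, does hold in the present situation: when $C\neq V(y)$ its normalization is again a strictly Henselian local curve, hence simply connected, and $V(y)$ is already normal. With those two adjustments your argument is complete.
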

\begin{proof} The ring $k[x][[y]]$ is the completion of $R=k[x,y]$ with
respect to the ideal $I=(y)$. The curve $R/I=k[x]$ is simply connected,
hence $\HH^1(k[x],\mathbb Q/\mathbb Z)=0$. For each curve $C$ in $\hat{X}_1$, $C$
is simply connected hence $K(C)$ has only ramified cyclic extensions. 
The rest is as for Proposition~\ref{prop1}.
\end{proof}

\begin{corollary}
\label{cor2}
Suppose, in the context of Proposition~\ref{prop2}, 
the curve defined by $f=0$ is nonsingular.
If $f$ is a $q$th power modulo $g$, then
$g$ is a $q$th power modulo $f$.
\end{corollary}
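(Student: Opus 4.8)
The plan is to reduce the statement to Proposition~\ref{prop2} and then show that the nonsingularity hypothesis removes the only remaining obstruction, exactly as in the proof of Corollary~\ref{cor1}. Write $\ScrO = k[x][[y]]/(f)$ for the coordinate ring of the curve $C$ defined by $f=0$, and let $\overline{\ScrO}$ denote its normalization. Since $f$ is a $q$th power modulo $g$, Proposition~\ref{prop2} supplies $s,t$ in the field of fractions of $k[x][[y]]$ with $g-s^q=ft$ and with the residue class $\bar{s}$ integral over $\ScrO$, so that $\bar{g}=\bar{s}^q$ exhibits $\bar{g}$ as a $q$th power in $\overline{\ScrO}$. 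The corollary is the stronger assertion that $\bar{g}$ is a $q$th power in $\ScrO$ itself, i.e. that $g$ is a $q$th power modulo $f$. Hence it suffices to prove $\overline{\ScrO}=\ScrO$, that is, that $\ScrO$ is integrally closed.

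The heart of the argument is therefore to verify that nonsingularity of $C$ forces $\ScrO$ to be normal. First I would record that $k[x][[y]]$ is a two-dimensional regular Noetherian ring, so that $\ScrO$, being the quotient by the single irreducible (hence nonzerodivisor) element $f$, is a one-dimensional Noetherian domain which is a hypersurface in a regular ring and thus Cohen-Macaulay. Unlike the complete local situation of Corollary~\ref{cor1}, where $\ScrO=k[[x,y]]/(f)$ was a single discrete valuation ring, here $\ScrO$ has many maximal ideals; the hypothesis that $C$ is nonsingular says precisely that each localization $\ScrO_{\mathfrak{m}}$ is a regular local ring of dimension one, i.e. a discrete valuation ring. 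Consequently $\ScrO$ is a Dedekind domain and in particular integrally closed. Equivalently, one may invoke Serre's normality criterion: $S_2$ is supplied by the Cohen-Macaulay property, while $R_1$, which for the one-dimensional ring $\ScrO$ amounts to regularity at all of its height-one (that is, all of its nonzero) primes, is exactly the nonsingularity of $C$.

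With $\overline{\ScrO}=\ScrO$ in hand the conclusion is immediate: the integral class $\bar{s}$ furnished by Proposition~\ref{prop2} lies in $\overline{\ScrO}=\ScrO$, so $\bar{g}=\bar{s}^q$ is a $q$th power in $\ScrO$, which is to say $g$ is a $q$th power modulo $f$; the proof then closes in the same manner as Corollary~\ref{cor1}, with nonsingularity of $C$ playing the role there played by the condition on the lowest degree form of $f$. The step I expect to be the main obstacle is the normality verification, and specifically matching the notion of ``nonsingular'' to the set of primes one must control. Because $\ScrO=k[x][[y]]/(f)$ is neither of finite type over $k$ nor complete, one must be careful to read the hypothesis as regularity of $\ScrO_{\mathfrak{p}}$ at \emph{every} nonzero prime $\mathfrak{p}$ of $\Spec{\ScrO}$ --- including primes that are not closed points of a finite-type model --- rather than only at geometric closed points. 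Once $R_1$ is confirmed across all of $\Spec{\ScrO}$, normality and hence the reciprocity conclusion follow at once, and the remaining bookkeeping (that $f$ is a nonzerodivisor so that $\ScrO$ is a domain, and that $\bar{s}$ is integral) is routine and already implicit in Proposition~\ref{prop2}.
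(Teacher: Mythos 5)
Your proof is correct, and at the top level it follows the same reduction as the paper: invoke Proposition~\ref{prop2} to get $\bar{g}=\bar{s}^q$ with $\bar{s}$ integral over $\ScrO=k[x][[y]]/(f)$, and then show that nonsingularity of $C$ forces $\ScrO$ to be normal, so that $\bar{s}\in\ScrO$ and $g$ is a $q$th power modulo $f$. Where you genuinely differ is in the normality step. The paper exploits the special structure of $k[x][[y]]$ and argues by cases: either $C$ is the curve $y=0$, so $\ScrO\cong k[x]$; or else $C$ is a henselian curve, meaning $\ScrO$ is local (every maximal ideal of $k[x][[y]]$ contains $y$, so an irreducible $f$ not associate to $y$ meets $y=0$ in a single point), and nonsingularity then makes $\ScrO$ a local principal ideal domain, i.e.\ a discrete valuation ring. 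You instead run a uniform commutative-algebra argument: $\ScrO$ is a one-dimensional Noetherian domain, nonsingularity says its localization at every maximal ideal (equivalently at every nonzero prime, since $\dim\ScrO=1$) is regular of dimension one, hence a DVR, so $\ScrO$ is Dedekind and in particular integrally closed; the Serre-criterion variant is equally valid, with $S_2$ automatic in dimension one. Your route is more general---it never uses the henselian structure or the fact that $\ScrO$ is local---while the paper's is shorter and gives an explicit identification of $\ScrO$ in both cases. One small slip to repair: in your closing parenthetical, $f$ being a nonzerodivisor does not make $\ScrO$ a domain; for that you need $f$ prime, which holds because $k[x][[y]]$ is a UFD (power series over a PID), or simply because the domain property is already presupposed in Proposition~\ref{prop2}, as you yourself observe.
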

\begin{proof} There are 2 possibilities for the curve $C$ defined by $f=0$. If
$C$ is the curve $y=0$, then the ring $\ScrO=k[x][[y]]/(f)$ is isomorphic
to $k[x]$. Otherwise $C$ is a henselian curve and
$\ScrO$ is a local principal ideal domain, since $C$ is nonsingular.
In both cases, $\ScrO$ is normal, so $g$ is a $q$th power in $\ScrO$.
\end{proof}

\begin{example}
\label{ex3}
Once again
consider the polynomials $f=x$, $g=y^2-x(x^2-1)$ from Examples~\ref{ex1}
and \ref{ex2},
but this time view them as elements of the power series ring in $y$ with
coefficients in $k[x]$, $k[x][[y]]$.
Notice
that $g$ factors into $x(x-1)(x+1)$ in $k[x,y]/(y)$, so in $k[x][[y]]$ $g$
factors into a product of 3 irreducibles.
(This is the henselian property.)
Denote this factorization by
$g=g_1g_2g_3$ where $g_1$ corresponds to $x=0$, $g_2$ to $x=1$ and $g_3$ to
$x=-1$.
Each curve $g_i=0$ is nonsingular.
Upon completion
with respect to $(y)$, the elliptic curve
$k[x,y]/(g)$ splits into a direct sum of 3 complete discrete valuation rings
corresponding to the 3 points $x=0$, $x=1$ and $x=-1$.
Now $g$ is clearly a square modulo $f$. Since $g_2$ and $g_3$ are units
modulo $f$ and $k[[y]]$ is a complete local ring and $k$ is algebraically
closed, $g_2$ and $g_3$
are also squares modulo $f$. This implies that $g_1$ is a square modulo
$f$. So far we have $(g_1/f)=(g_2/f)=(g_3/f)=1$.
{F}rom Corollary~\ref{cor2} we have $(f/g_1)=(f/g_2)=(f/g_3)=1$.
It follows that $f$ is a square in $k[x][[y]]/(g)$.
In other words, there are
power series $s$, $t$ in $k[x][[y]]$ satisfying the equation
\(
x -  s^2 = \left( y^2 - x(x^2-1) \right) t 
\).
\end{example}

\begin{example}
\label{ex6}
Once again
consider the polynomials $f=x$, $g=y^2-x(x^2-1)$ from Examples~\ref{ex1},
\ref{ex2}, and \ref{ex3},
but this time view them as elements of the power series ring in $x$ with
coefficients in $k[y]$, $k[y][[x]]$.
Notice
that in the ring $k[y][[x]]$, $g$
is irreducible.
Since the curve $g=0$ is nonsingular,
Corollary~\ref{cor2} applies.
By Corollary~\ref{cor2}, $(f/g)=(g/f)=1$.
In other words  the equation
\(
x - s^2 = \left( y^2 - x(x^2-1) \right) t
\)
has a solution for
power series $s$, $t$ in $k[y][[x]]$.
\end{example}

\begin{example}\label{ex10}
Let $f=x$, $g=y^2-x^2(x-1)$ viewed as power series in $k[[x,y]]$. Then $g$
factors into 2 irreducibles, say $g=g_1g_2$. The curves $g_1=0$ and $g_2=0$
correspond to the 2 branches through the origin on the nodal cubic curve
$g=0$. So $-1=(g_i/f)$ and by Corollary~\ref{cor1} $(f/g_i)=-1$. This
implies $(f/g)=-1$. If we view $f$, $g$ as elements of the subring
$k[x][[y]]$ or $k[y][[x]]$, we have similar results, namely $(g/f)=1$ and
$(f/g)=-1$ in each case.
\end{example}

% \bibliography{../bib/tim}

\begin{thebibliography}{10}

\bibitem{AM:See}
M.~Artin and D.~Mumford, \emph{Some elementary examples of unirational
  varieties which are not rational}, Proc. London Math. Soc. (3) \textbf{25}
  (1972), 75--95. \MR{0321934 (48 \#299)}

\bibitem{A:Tdo}
Michael Artin, \emph{Two-dimensional orders of finite representation type},
  Manuscripta Math. \textbf{58} (1987), no.~4, 445--471. \MR{88m:16032}

\bibitem{AB:Bgd}
Maurice Auslander and Armand Brumer, \emph{Brauer groups of discrete valuation
  rings}, Nederl. Akad. Wetensch. Proc. Ser. A \textbf{30} (1968), 286--296.
  \MR{37 \#4051}

\bibitem{BFKS:Wdr}
J.~Brewer, T.~Ford, L.~Klingler, and W.~Schmale, \emph{When does the ring
  ${K}[y]$ have the coefficient assignment property?}, J. Pure Appl. Algebra
  \textbf{112} (1996), no.~3, 239--246. \MR{97g:13014}

\bibitem{D:A}
Max Deuring, \emph{Algebren}, Springer-Verlag, Berlin, 1968. \MR{37 \#4106}

\bibitem{FS:Bgrff}
B.~Fein and M.~Schacher, \emph{Brauer groups of rational function fields over
  global fields}, The {B}rauer group (Sem., Les Plans-sur-Bex, 1980), Springer,
  Berlin, 1981, pp.~46--74. \MR{82h:12025}

\bibitem{F:BgLp}
T.~J. Ford, \emph{On the {B}rauer group of a {L}aurent polynomial ring}, J.
  Pure Appl. Algebra \textbf{51} (1988), no.~1-2, 111--117. \MR{89h:13006}

\bibitem{F:Daon}
Timothy~J. Ford, \emph{Division algebras over nonlocal {H}enselian surfaces},
  Pacific J. Math. \textbf{147} (1991), no.~2, 301--310. \MR{92d:12011}

\bibitem{MR611868}
Ofer Gabber, \emph{Some theorems on {A}zumaya algebras}, The {B}rauer group
  ({S}em., {L}es {P}lans-sur-{B}ex, 1980), Lecture Notes in Math., vol. 844,
  Springer, Berlin, 1981, pp.~129--209. \MR{611868 (83d:13004)}

\bibitem{MR0197380}
Carl~Friedrich Gauss, \emph{Disquisitiones arithmeticae}, Yale University
  Press, New Haven, Conn.-London, 1966, Translated into English by Arthur A.
  Clarke, S. J. \MR{0197380}

\bibitem{G:GBIII}
Alexander Grothendieck, \emph{Le groupe de {B}rauer. {I}{I}{I}. {E}xemples et
  compl\'ements}, Dix Expos\'es sur la Cohomologie des Sch\'emas,
  North-Holland, Amsterdam, 1968, pp.~88--188. \MR{0244271 (39 \#5586c)}

\bibitem{H:AG}
Robin Hartshorne, \emph{Algebraic geometry}, Springer-Verlag, New York, 1977.
  \MR{0463157 (57 \#3116)}

\bibitem{MR770588}
Raymond~T. Hoobler, \emph{Functors of graded rings}, Methods in ring theory
  ({A}ntwerp, 1983), NATO Adv. Sci. Inst. Ser. C Math. Phys. Sci., vol. 129,
  Reidel, Dordrecht, 1984, pp.~161--170. \MR{770588 (86f:16008)}

\bibitem{M:EC}
James~S. Milne, \emph{\'{E}tale cohomology}, Princeton University Press,
  Princeton, N.J., 1980. \MR{559531 (81j:14002)}

\bibitem{MR0277519}
Michel Raynaud, \emph{Anneaux locaux hens\'eliens}, Lecture Notes in
  Mathematics, Vol. 169, Springer-Verlag, Berlin-New York, 1970. \MR{0277519
  (43 \#3252)}

\bibitem{R:MO}
I.~Reiner, \emph{Maximal orders}, Academic Press [A subsidiary of Harcourt
  Brace Jovanovich, Publishers], London-New York, 1975, London Mathematical
  Society Monographs, No. 5. \MR{0393100 (52 \#13910)}

\bibitem{S:TV}
O.~F.~G. Schilling, \emph{The {T}heory of {V}aluations}, American Mathematical
  Society, New York, N. Y., 1950. \MR{13,315b}

\bibitem{MR554237}
Jean-Pierre Serre, \emph{Local fields}, Graduate Texts in Mathematics, vol.~67,
  Springer-Verlag, New York, 1979, Translated from the French by Marvin Jay
  Greenberg. \MR{554237 (82e:12016)}

\end{thebibliography}
% \bibliographystyle{amsplain}

\providecommand{\bysame}{\leavevmode\hbox to3em{\hrulefill}\thinspace}
\providecommand{\MR}{\relax\ifhmode\unskip\space\fi MR }
% \MRhref is called by the amsart/book/proc definition of \MR.
\providecommand{\MRhref}[2]{%
  \href{http://www.ams.org/mathscinet-getitem?mr=#1}{#2}
}
\providecommand{\href}[2]{#2}

\end{document}